\documentclass[a4paper,11pt]{article}
\usepackage{enumerate}
\usepackage{amsmath,amsthm,amssymb}

\setlength{\textwidth}{16cm}
\setlength{\textheight}{22cm}
\hoffset-1.5cm
\voffset-1cm

\parindent 0pt
\parskip \bigskipamount

\title{A note on large rainbow matchings in edge-coloured graphs}
\author{Allan Lo\thanks{School of Mathematics, University of Birmingham, B15 2TT, United Kingdom. Email: s.a.lo@bham.ac.uk. Allan Lo was supported by the ERC, grant no.~258345.} \and Ta Sheng Tan\thanks{Department of Pure Mathematics and Mathematical Statistics, Centre for Mathematical Sciences, University of Cambridge, Wilberforce Road, Cambridge CB3 0WB, United Kingdom. Email: T.S.Tan@dpmms.cam.ac.uk.} }
\newtheorem{thm}{Theorem}[section]

\newtheorem*{question}{Question}

\theoremstyle{remark}

\begin{document}

\maketitle
\begin{center}
 \emph{Submitted to Graphs and Combinatorics on April 19, 2012.}
\end{center}

\begin{abstract}
 A rainbow subgraph in an edge-coloured graph is a subgraph such that its edges have distinct colours. The minimum colour degree of a graph is the smallest number of distinct colours on the edges incident with a vertex over all vertices.
Kostochka, Pfender, and Yancey showed that every edge-coloured graph on $n$ vertices with minimum colour degree at least $k$ contains a rainbow matching of size at least $k$, provided $n\geq \frac{17}{4}k^2$. In this paper, we show that $n\geq 4k-4$ is sufficient for $k \ge 4$.
\end{abstract}

\section{Introduction}

Let $G$ be a simple graph, that is, no loops or multiple edges. We write $V(G)$ for the vertex set of $G$ and $\delta(G)$ for the minimum degree of $G$.  An \emph{edge-coloured graph} is a graph in which each edge is assigned a colour.  We say such an edge-coloured $G$ is \emph{proper} if no two adjacent edges have the same colour. A subgraph $H$ of $G$ is \emph{rainbow} if all its edges have distinct colours. Rainbow subgraphs are also called totally multicoloured, polychromatic, or heterochromatic subgraphs.

For a vertex $v$ of an edge-coloured graph $G$, the \emph{colour degree} of $v$ is the number of distinct colours on the edges incident with $v$. The smallest colour degree of all vertices in $G$ is the \emph{minimum colour degree of $G$} and is denoted by $\delta^c(G)$. Note that a properly edge-coloured graph $G$ with $\delta(G) \geq k$ has $\delta^c(G) \geq k$. 

In this paper, we are interested in rainbow matchings in edge-coloured graphs. The study of rainbow matchings began with a conjecture of Ryser~\cite{ryser}, which states that every Latin square of odd order contains a Latin transversal. Equivalently, for $n$ odd, every properly $n$-edge-colouring of $K_{n,n}$, the complete bipartite graph with $n$ vertices on each part, contains a rainbow copy of perfect matching. In a more general setting, given a graph $H$, we wish to know if an edge-coloured graph $G$ contains a rainbow copy of $H$. A survey on rainbow matchings and other rainbow subgraphs in edge-coloured subgraph can be found in \cite{kano}. From now onwards, we often refer to $G$ for an edge-coloured graph $G$ (not necessarily proper) of order $n$.

Li and Wang~\cite{wang2} showed that if $\delta^c(G)=k$, then $G$ contains a rainbow matching of size $\big\lceil \frac{5k-3}{12} \big\rceil$. They further conjectured that if $k\geq 4$, then $G$ contains a rainbow matching of size $\big\lceil \frac{k}{2} \big\rceil$. This bound is tight for properly edge-coloured complete graphs. LeSaulnier et al.~\cite{lesaul} proved that if $\delta^c(G) = k$, then $G$ contains a rainbow matching of size $\big\lfloor \frac{k}{2} \big\rfloor$. Furthermore, if $G$ is properly edge-coloured with $G\neq K_4$ or $|V(G)|\neq \delta(G) + 2$, then there is a rainbow matching of size $\big\lceil \frac{k}{2} \big\rceil$. The conjecture was later proved in full by Kostochka and Yancey~\cite{kostochka2}.

What happens if we have a larger graph? Wang~\cite{wang1} proved that every properly edge-coloured graph $G$ with $\delta(G) = k$ and $|V(G)|\geq \frac{8k}{5}$ contains a rainbow matching of size at least $\big\lfloor \frac{3k}{5} \big\rfloor$. He then asked if there is a function, $f(k)$, such that every properly edge-coloured graph $G$ with $\delta(G)\geq k$ and $|V(G)|\geq f(k)$ contains a rainbow matching of size $k$. The bound on the size of rainbow matching is sharp, as shown for example by any $k$-edge-coloured $k$-regular graph.
If $f(k)$ exists, then we trivially have $f(k)\geq 2k$. In fact, $f(k) > 2k$ for even $k$ as there exists $k \times k$ Latin square without any Latin transversal (see \cite{brualdi,wanless}). 
Diemunsch et at.~\cite{diemunsch1} gave an affirmative answer to Wang's question and showed that $f(k)\leq \frac{13}{5}k$. The bound was then improved to $f(k)\leq \frac{9}{2}k$ in~\cite{lo}, and shortly thereafter, to $f(k)\leq \frac{98}{23}k$ in~\cite{diemunsch2}. 

Kostochka, Pfender and Yancey~\cite{kostochka1} considered a similar problem with $\delta^c(G)$ instead of properly edge-coloured graphs. They showed that if $G$ is such that $\delta^c(G)\geq k$ and $n>\frac{17}{4}k^2$, then $G$ contains a rainbow matching of size $k$.
Kostochka~\cite{kostochka} then asked: can $n$ be improved to a linear bound in $k$? 
In this paper, we show that $n\geq 4k-4$ is sufficient for $k \ge 4$. Furthermore, this implies that $f(k)\leq 4k-4$ for $ k \ge 4$.

\begin{thm} \label{rainbowmatching}
If $G$ is an edge-coloured graph on $n$ vertices with $\delta^c(G) \geq k$, then $G$ contains a rainbow matching of size $k$, provided $n\geq 4k-4$ for $k\geq 4$ and $n\geq 4k-3$ for $k\leq 3$. 
\end{thm}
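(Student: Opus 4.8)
The plan is to argue by contradiction, using a maximum rainbow matching and the augmenting‑path method, but pushing the counting to its limit. Suppose $G$ has no rainbow matching of size $k$, and let $M=\{e_1,\dots,e_s\}$ be a rainbow matching of maximum size $s$; by hypothesis $s\le k-1$. Write $e_i=x_iy_i$ and, after relabelling colours, assume $e_i$ has colour $i$. Put $U=V(G)\setminus V(M)$, so that $|U|=n-2s\ge (4k-4)-2(k-1)=2k-2\ge 2s$. Call an edge \emph{fresh} if its colour lies outside $\{1,\dots,s\}$. Maximality of $M$ gives two basic facts: (i) no edge inside $U$ is fresh, for such an edge could be added to $M$; and (ii) for each $u\in U$ at most $s$ of its colours lie in $\{1,\dots,s\}$, so at least $k-s$ of its colours lie outside $\{1,\dots,s\}$, and by (i) these occur only on edges from $u$ to $V(M)$. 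Hence every vertex of $U$ is joined to $V(M)$ by fresh edges in at least $k-s$ distinct colours. It is probably also convenient to fix, among all maximum rainbow matchings, one that is extremal for a suitable secondary parameter, which further pins down the configuration.

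Next I would exploit maximality through local \emph{augmentation moves}. The basic move: if for some $i$ there are distinct $u,u'\in U$ with a fresh edge $ux_i$ and a fresh edge $u'y_i$ of different colours, then $M-e_i+ux_i+u'y_i$ is a larger rainbow matching, a contradiction. A second move uses a fresh edge of $G[V(M)]$: if $y_iy_j$ is fresh with $i\ne j$ and there are distinct $u,u'\in U$ with fresh edges $ux_i,u'x_j$ such that the three colours $c(ux_i),c(u'x_j),c(y_iy_j)$ are pairwise distinct, then $M-e_i-e_j+ux_i+u'x_j+y_iy_j$ is larger. Also, replacing $e_i$ by a fresh edge $ux_i$ produces another maximum rainbow matching in which $y_i$ has moved into $U$, so these moves can be iterated along an alternating structure. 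Combining them, one shows that the fresh edges are heavily constrained: for each $e_i$ one endpoint — say $y_i$ — is essentially \emph{closed}, sending almost no fresh edges into $U$, so the $\ge k-s$ fresh colours of each $u\in U$ must be absorbed almost entirely by the $s$ \emph{open} vertices $x_1,\dots,x_s$, while the fresh edges among $y_1,\dots,y_s$ and the colours appearing inside $U$ are tightly restricted by the second move and by (i).

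The contradiction then comes from a double count. On one side, the open vertices must absorb at least roughly $|U|(k-s)$ incidences of the form (vertex of $U$, fresh colour); on the other side, for each open vertex $x_i$ the forbidden augmentations bound the number of such incidences it can absorb, in terms of $s$, the colours present at $y_i$, and the fresh edges of $G[V(M)]$ at $x_i$. Since $|U|=n-2s$ is large, these estimates can be made precise and turn out to be incompatible with $n\ge 4k-4$; the boundary case $n=4k-4$ needs a little extra care. The small cases $k\le 3$, where the claimed bound is the slightly weaker $n\ge 4k-3$, are treated directly.

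I expect two points to be the real obstacles. First, the tidy statement ``one endpoint of each $e_i$ is closed'' fails in degenerate situations — for example when all fresh edges between $U$ and $\{x_i,y_i\}$ repeat a single colour, or when an edge of colour $i$, which becomes available again once $e_i$ is removed, lets an augmentation through — so one has to replace it by a quantitative version and carefully carry the exceptions. Second, and more seriously, getting the \emph{exact} constant $4$, rather than the larger multiples of $k$ in the earlier results, demands that the count be essentially lossless: every fresh colour‑incidence at a vertex of $U$ and every fresh edge inside $V(M)$ must be charged somewhere, and the iterated augmentation moves must be used to their full strength. That optimization, rather than any single clever idea, is where the work lies.
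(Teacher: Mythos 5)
Your proposal is a programme rather than a proof, and the step you defer---making the double count ``essentially lossless''---is precisely the step that is not known to work. The framework you describe (fix one maximum rainbow matching $M$ of size $s\le k-1$, note that each $u\in U=V(G)\setminus V(M)$ carries at least $k-s$ fresh colours all pointing into $V(M)$, and use forbidden augmentations to argue that one endpoint of each $e_i$ is essentially closed) is the Kostochka--Pfender--Yancey scheme. Carried out carefully, the basic and second moves you list show only that each $u\in U$ must place its $\ge k-s$ fresh colours on the $\le s$ open endpoints plus a bounded number of exceptional vertices, which yields a conclusion of the form $s\ge k/2$ up to additive constants---a bound in which $n$ does not appear at all. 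To make $n$ enter you must iterate the augmentation moves along alternating structures, and that iteration is exactly where the losses accumulate; it is how the $\frac{17}{4}k^2$ bound arises in the earlier work, and nothing in your sketch identifies a mechanism that would convert it into a linear bound. Acknowledging that ``the optimization is where the work lies'' does not close the gap: there is no explicit inequality on the table whose optimization could be checked.

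The paper avoids this difficulty by a structurally different argument: it inducts on $k$, so a rainbow matching $M$ of size $k-1$ is available for free, and then shows that if $G$ has no rainbow $k$-matching one can construct a \emph{second} rainbow matching $M'$ of size $k-1$, vertex-disjoint from $M$ and necessarily using the same colour set $[k-1]$. The hypothesis $n\ge 4k-4$ is exactly the room needed to accommodate $M$, $M'$ and the auxiliary vertices $z_i$ used in the greedy construction of $M'$; the contradiction comes from exhibiting a vertex of colour degree at most $k-1$ when the construction stalls, together with a short ad hoc argument in the tight case $n=4k-4$. If you want to rescue your approach, the missing idea is this second vertex-disjoint rainbow matching (equivalently, the induction on $k$), not a sharper version of the incidence count.
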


\section{Main Result}

We write $[k]$ for $\{1,2,\ldots,k\}$.
For an edge $uv$ in $G$, we denote by $c(uv)$ the colour of $uv$ and let the set of colours be $\mathbb{N}$, the set of natural numbers.

The idea of the proof is as follows. By induction, $G$ contains a rainbow matching $M$ of size $k-1$.
Suppose that $G$ does not contain a rainbow matching of size $k$.
We are going to show that there exists another rainbow matching $M'$ of size $k-1$ in $V(G) \setminus V(M)$.
Clearly, the colours of $M$ equal to the colours of $M'$. If $n \geq 4k-3$, then there exists a vertex $z$ not in $M\cup M'$.
Since $\delta^c(G) \ge k$, $z$ has a neighbour $w$ such that $zw$ does not use any colour of $M$. 
Hence, it is easy to deduce that $G$ contains a rainbow matching of size $k$.

\begin{proof}[Proof of Theorem~\ref{rainbowmatching}]
We proceed by induction on $k$. The theorem is trivially true for $k=1$. 
So fix $k>1$ and assume that the theorem is true for $k-1$. Let $G$ be an edge-coloured graph with $\delta^c(G) \geq k$ and $n=|V(G)|\geq 4k-4$ if $k\geq 4$ and $n\geq 4k - 3$ otherwise. 
Suppose that the theorem is false and so $G$ does not contain a rainbow matching of size $k$.

By induction, there exists a rainbow matching $M = \{ x_iy_i : i \in [k-1]\} $ in $G$, say with $c(x_iy_i) = i$ for each $ i \in [k-1]$.
Let $M'$ be another rainbow matching of size $s$ (which could be empty) in $G$ vertex-disjoint from $M$.
Clearly $s \le k-1$ and the colours on $M'$ is a subset of $[k-1]$, as otherwise $G$ contains a rainbow matching of size $k$.
Without loss of generality, we may assume that $M' = \{ z_iw_i : i \in [s] \}$ with $c(z_iw_i) = i$ for each $i \in [s]$.
We further assume that $M$ and $M'$ are chosen such that $s$ is maximal.
Now, let $W = V(G)  \setminus V(M \cup M')$ and $S = \{x_i, y_i, z_i, w_i : i \in [s]\}$. Clearly, if there is an edge in $W$, it must have colour in $[s]$, otherwise $G$ contains a rainbow matching of size $k$, or $s$ is not maximal.

\textbf{Fact A}
If $uw$ is an edge in $W$, then $c(uw) \in [s]$.

Furthermore, if $uv$ is an edge with $u \in S$ and $v \in W$, then $c(uv) \in [k-1]$, otherwise $G$ contains a rainbow matching of size $k$.
First, we are going to show that $s=k-1$. Suppose the contrary, $s<k-1$.
We then claim the following.

\textbf{Claim}
By relabeling the indices of $i$ (in the interval $\{s+1,s+2,\ldots,k-1\}$) and swapping the roles of $x_i$ and $y_i$ if necessary, there exist distinct vertices $z_{k-1}$, $z_{k-2}$, \dots, $z_{s+1}$ in $W$ such that for $s+1 \le i \le k-1$ the following holds for $s+1 \le i \le k-1$:
\begin{enumerate}[(a)]
 \item $y_iz_i$ is an edge and $c(y_iz_i) \notin [i]$.
 \item Let $T_i$ be the vertex set $\{x_j, y_j, z_j : i \le j \le k-1\}$.
       For any colour $j$, there exists a rainbow matching of size $k-i$ on $T_i$ which does not use any colour in $[i-1] \cup \{j\}$.
 \item Let $W_i = W \backslash \{z_i, z_{i+1}, \dots, z_{k-1} \}$.
       If $x_i w$ is an edge with $w \in W_i$, then $ c(x_i w ) \in [s]$.
 \item If $uw$ is an edge with $u \in S$ and $w \in W_i$, then $c(uw) \in [i-1]$.
 \item If $uw$ is an edge with $u \in S \cup T_i \cup W$ and $w \in W_i$, then $c(uw) \in [i-1]$ or $u \in \{y_i, \dots, y_{k-1} \}$.
\end{enumerate} 

\textit{Proof of Claim.} 
Let $W_k=W$ and $T_k=\emptyset$. Observe that part (d) and (e) of the claim hold for $i = k$. For each $i = k-1, k-2, \dots, s+1$ in terms, we are going to find $z_i$ satisfying (a) -- (e). 
Suppose that we have already found $z_{k-1}, z_{k-1}, \ldots, z_{i+1}$.

Note that $|W_{i+1}| \ge n - 2(k-1) - 2s - (k-i-1) \ge 1$, so $W_{i+1} \ne \emptyset$.
Let $z$ be a vertex in $W_{i+1}$. 
By the colour degree condition, $z$ must incident with at least $k$ edges of distinct colours, and in particular, at least $k-i$ distinct coloured edges not using colours in $[i]$. 
Then, there exists a vertex $u\in \{x_j,y_j: s+1\leq j \leq i\}$ such that $uz$ is an edge with $c(uz)\notin [i]$ by part (e) of the claim for the case $i+1$.
Without loss of generality, $u=y_i$ and we set $ z_i=z $.

Part (b) of the claim is true for colour $j\neq i$, simply by taking the edge $x_iy_i$ together with a rainbow matching of size $k-i-1$ on $T_{i+1}$ which does not use any colour in $[i] \cup \{j\}$. For colour $j=i$, we take the edge $y_iz_i$ together with a rainbow matching of size $k-i-1$ on $T_{i+1}$ which does not use any colour in $[i] \cup \{c(y_iz_i)\}$.

To show part (c) of the claim, let $x_i w$ be an edge for some $w \in W_i$. By part (b) of the claim for the case $i+1$, there exists a rainbow matching $M''$ of size $k-i-1$ on $T_{i+1}$ which does not use any colour in $[i]\cup \{c(y_iz_i)\}$. Set ${M}_0 = \{x_j y_j : j \in [i-1] \} \cup M'' \cup \{y_i z_i\}$. Then, $M_0$ is a rainbow matching of size $k-1$ vertex-disjoint from $M'$. 
Now, by considering the pair $(M_0,M')$ instead of $(M,M')$, we must have $c(x_i w) \in [s]$. 
Otherwise, $G$ contains a rainbow matching of size $k$ or $s$ is not maximal.

Let $uw$ be an edge with $u \in S$, $w \in W_i$ and $ c(u w ) \notin [i-1]$.
Pick a rainbow matching $M_u$ of size $s$ on $S \setminus \{u\}$ with colours $[s]$, and a rainbow matching $M_u'$ of size $k-i$ on $T_i$ which does not contain any colour in $[i-1]\cup \{c(uw)\}$.
Then, $\{uw\} \cup M_u \cup M_u' \cup \{x_j y_j : s+1\leq j \leq i-1 \}$ is a rainbow matching of size $k$ in $G$, a contradiction.
So $c(u w)\in [i-1]$ for any $u\in S$ and $w\in W$, showing part (d) of the claim.

Part (e) of the claim follows easily from Fact~A, (c) and (d). This completes the proof of the claim. \qed

Recall that $s<k-1$. So we have $|W_{s+1}| = n - 2(k-1) - 2s - (k-1-s) \geq k-1-s \geq 1$. Pick a vertex $w\in W_{s+1}$. By part (e) of the claim, $w$ adjacent to vertices in $\{y_{s+1},y_{s+2},\ldots,y_{k-1}\}$ or $w$ incident with edges of colours in $[s]$. Hence, $w$ has colour degree at most $k-1$, which contradicts $\delta^c(G) \geq k$. Thus, we must have $s=k-1$ as claimed.
In summary, we have $M = \{ x_iy_i : i \in [k-1]\}$ and $M' = \{ z_iw_i : i \in [k-1] \}$ with $c(x_iy_i) = i = c(z_iw_i) $ for $i\in [k-1]$.

Now, if $n\geq 4k-3$, then $  V(G) \neq V(M \cup M')$. 
Pick a vertex $w\notin V(M \cup  M')$ and since $w$ has colour degree at least $k$, there exists a vertex $u$ such that $uw$ is an edge and $c(uw)\notin [k-1]$. 
It is easy to see that $G$ contains a rainbow matching of size $k$, contradicting our assumption. 
Therefore, we may assume $n=4k-4$ and $k\geq 4$. 

Since $\delta^c(G)\geq k$, any vertex $u\in \{x_1,y_1,z_1,w_1\}$ must have a neighbour $v$ such that $c(uv)\notin [k-1]$. If $v\notin \{x_1,y_1,z_1,w_1\}$, then $G$ contains a rainbow matching of size $k$. 
So, without loss of generality, $x_1 z_1$ and $y_1 w_1$ are edges in $G$ with $c(x_1 z_1),c(y_1 w_1)\notin [k-1]$.
By symmetry, we may assume that for each $i\in [k-1]$, $x_i z_i$ and $y_i w_i$ are edges in $G$ with $c(x_i z_i), c(y_i w_i) \notin [k-1]$.
As $\delta^c(G)\geq k\geq 4$, $x_1$ must have a neighbour $v\notin \{y_1,z_1,w_1\}$ with $c(x_1 v)\neq 1$. Without loss of generality, we may assume $v=z_j$ for some $j$ and $c(x_1 z_j) = 2$. Now, $\{x_1z_j, z_1 w_1, y_2 w_2, \} \cup \{x_i y_i : i\in \{3,4,\ldots,k-1\}$ is a rainbow matching of size $k$ in $G$, which again is a contradiction. This completes the proof of the theorem. 
\end{proof}

\section{Remarks}

In Theorem~\ref{rainbowmatching}, the bound on $n$, the number of vertices, is sharp for $k=2,3$ (and trivially for $k=1$), as shown by properly 3-edge-coloured $K_4$ for $k=2$ and by properly 3-edge-coloured two disjoint copies of $K_4$ for $k=3$.
However, we do not know if the bound is sharp for $k\geq 4$.

\begin{question}
Given $k$, what is the minimum $n$ such that every edge-coloured graph $G$ of order $n$ with $\delta^c(G) = k$ contains a rainbow matching of size $k$? 
\end{question}

\subsection*{Acknowledgment}
The authors thank Alexandr Kostochka for suggesting the problem during `Probabilistic Methods in Graph Theory' workshop at University of Birmingham, United Kingdom.
We would also like to thank Daniela K\"uhn, Richard Mycroft and Deryk Osthus for organizing this nice event.

\end{document}